\newtheorem{theorem}{Theorem}
\newtheorem{lemma}{Lemma}
\newtheorem{proposition}{Proposition}
\theoremstyle{definition}
\newtheorem{definition}{Definition}
\newtheorem{conjecture}{Conjecture}
\newtheorem{remark}{Remark}
\newtheorem{fact}{Fact}
\newcommand{\E}{\mathbb{E}}
\newcommand{\Inf}{\mathrm{Inf}}
\newcommand{\Ent}{\mathrm{Ent}}
\newcommand{\I}{\mathrm{I}}
\newcommand\tup[1]{\left\langle #1 \right\rangle}
\title{A Lower Bound for the Fourier Entropy of Boolean Functions on the Biased Hypercube}
\begin{document}
\author{Fan Chang\thanks{School of Statistics and Data Science, Nankai University, Tianjin, China and Extremal Combinatorics and Probability Group (ECOPRO), Institute for Basic Science (IBS), Daejeon, South Korea. E-mail: 1120230060@mail.nankai.edu.cn.  Supported by the NSFC under grant 124B2019 and the Institute for Basic Science (IBS-R029-C4).}}
\date{}

\maketitle

\begin{abstract}
We study Boolean functions on the $p$-biased hypercube $(\{0,1\}^n,\mu_p^n)$ through the lens of Fourier (spectral) entropy, i.e. the Shannon entropy of the squared $p$-biased Fourier coefficients.
Motivated by recent restriction-based advances on \emph{upper} bounds toward the Fourier-Entropy-Influence (FEI) conjecture, we prove a complementary, sharp \emph{lower} bound that decomposes the entropy into coordinate-wise contributions.
Let $q:=4p(1-p)$ and define $\Psi:[0,\tfrac12]\to[0,\ln 2]$ by $\Psi(t):=h\left(\frac{1+\sqrt{1-4t^2}}{2}\right)$, where $h(u):=-u\ln u-(1-u)\ln(1-u)$. We show that for every Boolean $f:(\{0,1\}^n,\mu_p^n)\to\{\pm1\}$,
\[
\Ent_p(f) \ge \sum_{k=1}^n \Psi\left(\sqrt{q(1-q)}\cdot\Inf_k^{(p)}[f]\right).
\]
When $p\neq \tfrac12$, this bound is tight and equality holds if and only if $f$ is a parity function. Our proof adapts the restriction-moment framework to the biased cube.
\end{abstract}

\section{Introduction}
Functions defined on the discrete cube $\{0,1\}^n$ are fundamental objects in Combinatorics and Theoretical Computer Science. Consider the discrete cube $\{0,1\}^n$ endowed with the $p$-biased measure $\mu_p^n=(p\delta_{\{1\}}+(1-p)\delta_{\{0\}})^{\otimes n}$ for $p\in(0,1)$. It is well known that every such function $f:(\{0,1\}^n,\mu_p^n)\to\mathbb{R}$ can be represented as a linear combination,
$$
f=\sum\limits_{S\subset[n]}\hat{f}(S)\cdot \chi^p_S,
$$
of the $2^n$ functions $\{\chi_S^p\}_{S\subset[n]}$ defined by $\chi^p_S(x)=\prod_{i\in S}\frac{x_i-p}{\sqrt{p(1-p)}}$. This representation is known as the \emph{Fourier expansion} of the function $f$, and the real numbers $\hat{f}(S)$ are known as its \emph{Fourier coefficients w.r.t. $\mu_p^n$}. Many applications (e.g., to the study of percolation~\cite{BKS1999}, threshold phenomena in random graphs~\cite{KLMM2024globalhyper2024,Talagrand1994randomgraph}, and hardness of approximation~\cite{DS2005Annal}) rely upon the use of the biased measure on the discrete cube.

For Boolean-valued functions $f:(\{0,1\}^n,\mu_p^n)\to\{\pm1\}$, Parseval's identity gives $\sum_{S\subset[n]}\hat{f}(S)^2=1$, so the squared Fourier coefficients form a probability distribution on $\{0,1\}^n$. This motivates the following notion of \emph{spectral entropy} (all logarithms are natural throughout): the Shannon entropy of this distribution, which quantitatively captures how widely the Fourier mass is dispersed among the coefficients.
\begin{definition}
For $f:(\{0,1\}^n,\mu_p^n)\to \{\pm 1\}$, the \emph{spectral/Fourier entropy of $f$ with respect to the measure $\mu_p^n$} is
$$
{\rm Ent}_p(f):=\sum\limits_{S\subset[n]}\hat{f}(S)^2\log\left(\frac{1}{\hat{f}(S)^2}\right).
$$
\end{definition}
A central theme in the analysis of Boolean functions is the interplay between spectral quantities and combinatorial sensitivity parameters such as \emph{influences}. For $i\in[n]$, let ${\rm Inf}_i^{(p)}[f]:=\mathbb{P}_{x\sim\mu_p}[f(x)\neq f(x\oplus e_i)]$ denote the $p$-biased influence of coordinate $i$, and let ${\rm I}^{(p)}[f]:=\sum_{i=1}^n{\rm Inf}_i^{(p)}[f]$ be the total influence. In the direction of \emph{upper} bounds on Fourier entropy, it is natural to recall the original Fourier-Entropy-Influence (FEI) Conjecture of Friedgut and Kalai in 1996, one of the longstanding and influential open problems in the field:

\begin{conjecture}[Friedgut--Kalai~\cite{FKthreshold1996}, FEI for the uniform cube]
There exists a universal constant $C>0$ such that for every $n$ and every Boolean function $f:(\{0,1\}^n,\mu_{1/2}^n)\to\{\pm 1\}$,
\[
{\rm Ent}_{1/2}(f) \le C\cdot {\rm I}^{(1/2)}[f].
\]
\end{conjecture}
A weaker conjecture is the so-called Fourier-Min-Entropy-Influence Conjecture which asks if the min-entropy $\min_{S\subset[n]}\log_2\hat{f}(S)^2$ could be also bounded by constant times of ${\rm I}^{(1/2)}[f]$. Keller, Mossel and Schlank~\cite{KMS2012} later formulated the following $p$-biased generalization on $(\{0,1\}^n,\mu_p^n)$:

\begin{conjecture}[Keller--Mossel--Schlank~\cite{KMS2012}, FEI for the biased cube]
 There exists a universal constant $C$, such that for any $0 < p < 1$, for any $n$ and for any Boolean-valued function $f:(\{0,1\}^n,\mu_p^n)\to \{\pm 1\}$,
 \[
 {\rm Ent}_p(f)\le Cp\log\left(\frac{1}{p}\right)\cdot{\rm I}^{(p)}[f].
 \]
\end{conjecture}
This upper bound is known to be tight (up to constants) for several natural monotone graph properties at their critical thresholds. For example, fix $r<\log n$ and choose $p_c$ so that $\binom{n}{r}\cdot p_c^{\binom{r}{2}}=\tfrac{1}{2}$. Let $f$ be the indicator function of the property ``$G$ contains $K_r$ as an induced subgraph'' on $G(n,p_c)$. Then one has
\[
{\rm Ent}_{p_c}(f)\ge Cp_c\log\frac{1}{p_c}\cdot {\rm I}^{(p_c)}[f],
\]
witnessing tightness of the conjectured scaling.

Despite substantial effort, unconditional FEI-type bounds are known only for special classes of functions
(e.g.\ \cite{ACKSW21fourierentropy,CKLS2016,das2011entropy,KLW2010,OT2013,OWZ2011,shalev2018fourier,WWW2014}).
Recently, two notable advances for the \emph{uniform} cube were obtained via random restrictions:
Kelman--Kindler--Lifshitz--Minzer--Safra~\cite{KKLMS2020} proved an $O(\I[f]\log\I[f])$ bound for the min-entropy under constant variance,
and Han~\cite{Han2025} proved an $O\big(\I[f]+\sum_i \Inf_i[f]\log(1/\Inf_i[f])\big)$ bound for the Shannon entropy.

\subsection{Main result: a sharp lower bound}\label{subsec:main-lb}

Motivated by Han's restriction-moment framework, we take a complementary direction:
rather than upper bounding $\Ent_p(f)$ in terms of total influence $\I^{(p)}[f]$, we establish a general \emph{lower bound}
on spectral entropy in terms of coordinate influences.
The guiding message is a spectral anti-concentration principle:
\emph{if $f$ has substantial sensitivity to coordinate resampling, then its Fourier spectrum cannot be too concentrated and must carry nontrivial entropy.}
In particular, our bound is expressed through a universal one-dimensional transform of each influence, and is sharp with a complete extremizer characterization.

Write $q:=4p(1-p)\in(0,1],h(q):=-q\ln q-(1-q)\ln(1-q)$, and define, for $t\in[0,\tfrac12]$,
\begin{equation}\label{eq:Psi-intro}
\Psi(t):=h\left(\frac{1+\sqrt{1-4t^2}}{2}\right).
\end{equation}
Equivalently, if $\theta\in[0,1]$ and $t=\sqrt{\theta(1-\theta)}$, then $h(\theta)=\Psi(t)$, so $\Psi$
is simply the binary entropy expressed in terms of the Bhattacharyya parameter $2t$~\cite{kesal2017generalized}.
Note also that $q=1$ iff $p=\tfrac12$, and then $q(1-q)=0$ and $h(q)=0$; in particular, any influence-based
\emph{uniform} lower bound must degenerate at the uniform point. This is unavoidable:
already on the uniform cube, parity functions have $\Ent_{1/2}(f)=0$ while $\Inf_i^{(1/2)}[f]=1$ on their support.

Our main theorem is the following sharp non-linear lower bound.

\begin{theorem}\label{thm:main-1}
For every Boolean $f:(\{0,1\}^n,\mu_p^n)\to\{\pm1\}$ and every $0<p<1$,
\begin{equation}\label{eq:main-Psi}
\Ent_p(f) \ge \sum_{k=1}^n \Psi\left(\sqrt{q(1-q)}\cdot\Inf_k^{(p)}[f]\right).
\end{equation}
Moreover, if $p\neq\tfrac12$ then equality holds in \eqref{eq:main-Psi} if and only if
\[
f(x)=\pm\prod_{i\in T}(2x_i-1)
\]
for some subset $T\subseteq[n]$ (including $T=\varnothing$, which gives constant functions).
\end{theorem}
\begin{remark}
For small $t$, one has $\Psi(t)=t^2\log(1/t^2)+O(t^2)$, so \eqref{eq:main-Psi} yields a logarithmic strengthening
in regimes where all influences are small and spread out (e.g.\ majority-type functions).
At the other extreme, $\Psi(\sqrt{q(1-q)}\,t)$ saturates at $h(q)$ as $t\to1$, matching the parity extremizers.
Thus \eqref{eq:main-Psi} cleanly interpolates between ``many tiny influences'' and ``few large influences''.   
\end{remark}
Define $\psi(s):=\Psi(\sqrt{s})$ for $s\in[0,\tfrac14]$. Then $\psi$ is concave with $\psi(0)=0$,
and the chord bound (Lemma~\ref{lem:Psi-concave}) implies, for all $t\in[0,1]$,
\[
\Psi\left(\sqrt{q(1-q)}\cdot t\right) \ge h(q)\cdot t^2,
\]
which immediately yields the clean quadratic bound
\begin{equation}\label{eq:main-linear-intro}
\Ent_p(f) \ge h(q)\cdot\sum_{k=1}^n \Inf_k^{(p)}[f]^2.
\end{equation}
More generally, if $\max_k \Inf_k^{(p)}[f]\le\rho$, the same concavity argument gives a strictly better constant
\[
\Ent_p(f) \ge C(p,\rho)\cdot\sum_{k=1}^n \Inf_k^{(p)}[f]^2,
\qquad
C(p,\rho):=\frac{\Psi(\sqrt{q(1-q)}\cdot\rho)}{\rho^2},
\]
with $C(p,\rho)\ge h(q)$ and strict inequality when $\rho<1$.

\medskip\noindent\emph{Organization.}
This paper is organized as follows.
Section~\ref{sec:pre} collects notation and preliminaries on $p$-biased Fourier analysis, random restrictions, and basic properties of the transform $\Psi$.
Section~\ref{sec:proofs} contains the proof of Theorem~\ref{thm:main-1} (the main entropy lower bound) and the characterization of the equality cases.
Appendices~\ref{app:convexityof Phi} and~\ref{app:convexityof phi} provide proofs of Lemmas~\ref{lem:Psi-convex} and~\ref{lem:Psi-concave}, respectively.

\section{Preliminaries and Proofs}\label{sec:pre}

\subsection{Biased Fourier analysis}
We work on $L^2(\{0,1\}^n,\mu_p^n)$, where $\mu_p$ is the distribution on $\{0,1\}$ defined by $\mu_p(1)=p, \mu_p(0)=1-p$. For $x\in \{0,1\}^n$, let $|x|=|\{i\in[n]:x_i=1\}|$ and then
\[
\mu_p(x):=\mu_p^{n}(x)=p^{|x|}(1-p)^{n-|x|}, \quad \underset{x\sim\mu_p}{\mathbb{E}}[f(x)]=\sum\limits_{x\in \{0,1\}^n}f(x)\mu_p(x).
\]
In the context of $p$-biased Fourier analysis we define the $p$-biased basis function $\chi_i^p:\{0,1\}\to\mathbb{R}$ by
$$
\chi_i^p(x_i):=\frac{x_i-\mu}{\sigma}, \ \mu=\underset{x_i\sim\mu_p}{\mathbb{E}}[x_i]=p, \ \sigma=\sqrt{\underset{x_i\sim\mu_p}{\mathbb{E}}[x_i^2]-\underset{x_i\sim\mu_p}{\mathbb{E}}[x_i]^2}=\sqrt{p-p^2}=\sqrt{p(1-p)}.
$$
Equivalently,
\[
\chi_i^p(x_i)=
\begin{cases}
\sqrt{\frac{1-p}{p}}, &\text{if $x_i=1$},\\
-\sqrt{\frac{p}{1-p}}, &\text{if $x_i=0$}.
\end{cases}
\]
For $S\subset[n]$ we define the product $p$-biased Fourier basis functions $\chi_S^p(x)=\prod_{i\in S}\chi_i^p(x_i).$ By construction $\{\chi_S^p\}_{S\subseteq[n]}$ is an orthonormal basis of $L^2(\{0,1\}^n,\mu_p^n)$. For $f\in L^2(\{0,1\}^n,\mu_p^n)$ we write its $p$-biased Fourier coefficients as
\[
\hat{f}(S)=\underset{x\sim\mu_p^n}{\mathbb{E}}[f(x)\cdot \chi_S^p(x)],
\]
so that
\[
f(x)=\sum\limits_{S\subseteq[n]} \hat{f}(S)\chi^p_S(x).
\]
Next we have Plancherel's identity, which states that the inner product of $f$ and $g$ is precisely the dot product of their vectors of Fourier coefficients.
\begin{fact}[Plancherel's identity]
 Let $f,g\in L^2(\{0,1\}^n,\mu_p^n)$. Then $\tup{f,g}=\sum\limits_{S\subseteq[n]}\hat{f}(S)\hat{g}(S)$.   
\end{fact}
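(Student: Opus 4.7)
The plan is to prove Plancherel's identity as a direct consequence of bilinearity of the inner product together with the orthonormality of the $p$-biased basis $\{\chi_S^p\}_{S\subseteq[n]}$, which was already noted ``by construction'' in the paragraph preceding the fact. Concretely, I would expand both arguments in the Fourier basis as $f=\sum_{S\subseteq[n]}\hat f(S)\chi_S^p$ and $g=\sum_{T\subseteq[n]}\hat g(T)\chi_T^p$, substitute into $\langle f,g\rangle=\mathbb{E}_{x\sim\mu_p^n}[f(x)g(x)]$, and push the expectation inside the (finite) double sum using linearity. This turns the left-hand side into $\sum_{S,T\subseteq[n]}\hat f(S)\hat g(T)\,\langle \chi_S^p,\chi_T^p\rangle$.

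The main computational step is then to verify orthonormality $\langle\chi_S^p,\chi_T^p\rangle=\mathbf{1}[S=T]$, which is the only substantive piece. Here I would use that $\mu_p^n$ is a product measure and that $\chi_S^p(x)\chi_T^p(x)=\prod_{i\in S\triangle T}\chi_i^p(x_i)\cdot\prod_{i\in S\cap T}\chi_i^p(x_i)^2$, so that the expectation factorizes as $\prod_{i\in S\triangle T}\mathbb{E}_{\mu_p}[\chi_i^p]\cdot\prod_{i\in S\cap T}\mathbb{E}_{\mu_p}[(\chi_i^p)^2]$. By the normalization $\chi_i^p(x_i)=(x_i-p)/\sqrt{p(1-p)}$ one has $\mathbb{E}_{\mu_p}[\chi_i^p]=0$ and $\mathbb{E}_{\mu_p}[(\chi_i^p)^2]=\mathrm{Var}_{\mu_p}(x_i)/(p(1-p))=1$. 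Hence any symmetric difference $S\triangle T\neq\emptyset$ contributes a zero factor, while $S=T$ contributes $1$.

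Substituting this into the double sum collapses it to $\sum_{S\subseteq[n]}\hat f(S)\hat g(S)$, which is the claimed identity. I do not expect any real obstacle here: the finiteness of the sums avoids convergence issues, and the product structure of $\mu_p^n$ reduces orthonormality to the two one-coordinate moment computations above. The only place to be slightly careful is to make explicit that $\chi_S^p$ is a genuine product over $i\in S$, so that $\chi_S^p\chi_T^p$ really factors as indicated and the product measure can be applied coordinatewise.
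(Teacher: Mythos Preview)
Your proposal is correct and is exactly the standard argument the paper has in mind: the paper does not spell out a proof of this Fact, but it explicitly notes just before it that ``by construction $\{\chi_S^p\}_{S\subseteq[n]}$ is an orthonormal basis of $L^2(\{0,1\}^n,\mu_p^n)$,'' and Plancherel then follows by the bilinearity-plus-orthonormality computation you describe. There is nothing to add.
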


\begin{definition}
For $i\in[n]$, the $i$th (discrete) derivative operator $\partial_i$ on $L^2(\{0,1\}^n,\mu_p^n)$ is defined by
\[
\partial_i f(x)=\sigma\cdot (f_{i\to 1}(x)-f_{i\to 0}(x)).
\]
\end{definition}
\begin{fact}
Let $f\in L^2(\{0,1\}^n,\mu_p^n)$, and let $i\in[n]$. Then
$$
\partial_if(x)=\sum\limits_{S:i\in S}\hat{f}(S)\chi^p_{S\setminus\{i\}}(x).
$$
\end{fact}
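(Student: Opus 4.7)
The plan is to substitute the Fourier expansion of $f$ directly into the definition of $\partial_i$ and match coefficients. Split the expansion as
\[
f(x)=\sum_{S:i\notin S}\hat f(S)\chi^p_S(x)+\sum_{S:i\in S}\hat f(S)\chi^p_S(x).
\]
For $S$ with $i\notin S$, the basis function $\chi^p_S$ does not depend on the $i$th coordinate, so $(\chi^p_S)_{i\to 1}=(\chi^p_S)_{i\to 0}$ and the first sum contributes zero to $f_{i\to 1}-f_{i\to 0}$. For $S$ with $i\in S$, the product structure gives the factorization $\chi^p_S(x)=\chi^p_i(x_i)\cdot\chi^p_{S\setminus\{i\}}(x)$, where the second factor is independent of $x_i$.

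Thus the only remaining ingredient is the scalar identity $\chi^p_i(1)-\chi^p_i(0)=1/\sigma$. Using the explicit formula for $\chi^p_i$, one computes
\[
\chi^p_i(1)-\chi^p_i(0)=\sqrt{\tfrac{1-p}{p}}+\sqrt{\tfrac{p}{1-p}}=\frac{(1-p)+p}{\sqrt{p(1-p)}}=\frac{1}{\sigma}.
\]
Plugging this into the definition,
\[
\partial_i f(x)=\sigma\bigl(f_{i\to 1}(x)-f_{i\to 0}(x)\bigr)=\sigma\sum_{S:i\in S}\hat f(S)\bigl(\chi^p_i(1)-\chi^p_i(0)\bigr)\chi^p_{S\setminus\{i\}}(x)=\sum_{S:i\in S}\hat f(S)\chi^p_{S\setminus\{i\}}(x),
\]
which is exactly the claimed identity.

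There is essentially no conceptual obstacle here; the proof is a one-line calculation after the two observations that (i) coordinates not in $S$ drop out of the discrete difference and (ii) the normalization constant $\sigma=\sqrt{p(1-p)}$ in the definition of $\partial_i$ is precisely chosen to cancel the factor $\chi^p_i(1)-\chi^p_i(0)=1/\sigma$. The only thing worth being careful about is the sign conventions in the definition of $\chi^p_i$, since a sign flip there would propagate into the formula; verifying that $\chi^p_i(1)-\chi^p_i(0)$ is positive and equals $1/\sigma$ (rather than $-1/\sigma$) is what ensures the final expression has no extra sign.
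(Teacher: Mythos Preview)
Your proof is correct and follows essentially the same approach as the paper: expand $f$ in the Fourier basis, observe that terms with $i\notin S$ cancel in the discrete difference, and for $i\in S$ use the factorization $\chi^p_S=\chi^p_i\cdot\chi^p_{S\setminus\{i\}}$ together with the identity $\sigma\bigl(\chi^p_i(1)-\chi^p_i(0)\bigr)=1$. The only cosmetic difference is that the paper writes out the values $\chi^p_S(x_i=1,y)=\sqrt{\tfrac{1-p}{p}}\chi^p_{S\setminus\{i\}}(y)$ and $\chi^p_S(x_i=0,y)=-\sqrt{\tfrac{p}{1-p}}\chi^p_{S\setminus\{i\}}(y)$ explicitly before combining, whereas you isolate the scalar $\chi^p_i(1)-\chi^p_i(0)=1/\sigma$ first.
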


\begin{definition}
For a function $f:(\{0,1\}^n,\mu_p^n)\to \{\pm1\}$, the $i$th-influence of $f$ is defined by
$$
{\rm Inf}^{(p)}_i[f]:=\underset{x\sim \mu_p}{\mathbb{P}}[f(x)\neq f(x\oplus e_i)],
$$
where $x\oplus e_i$ is obtained from $x$ by flipping the $i$th coordinate. The total influence of the function is ${\rm I}^{(p)}[f]:=\sum_{i=1}^n{\rm Inf}^{(p)}_i[f]$.
\end{definition}
The derivative gives the standard Fourier expansion of influences:
\[
\sum\limits_{S:i\in S}\hat{f}(S)^2=\underset{\mu_p}{\mathbb{E}}[|\partial_if|^2]=\sigma^2\cdot\underset{\mu_p}{\mathbb{E}}\left[\left|f_{i\to 1}-f_{i\to0}\right|^2\right]=4\sigma^2\cdot\underset{x\sim \mu_p}{\mathbb{P}}[f(x)\neq f(x\oplus e_i)]=4p(1-p)\cdot{\rm Inf}_i^{(p)}[f],
\] 
and summing over $i$ gives
\[
{\rm I}^{(p)}[f]:=\sum_{i=1}^n{\rm Inf}_i^{(p)}[f]=\frac{1}{4p(1-p)}\sum\limits_{i=1}^n\sum\limits_{S:i\in S}\hat{f}(S)^2=\frac{1}{4p(1-p)}\sum\limits_{S\subset [n]}|S|\hat{f}(S)^2.
\]

\begin{lemma}\label{lem:Fourierp-biasedinf}
For any $f:(\{0,1\}^n,\mu_p^n)\to\mathbb{R}$ and any $k\in[n]$,
\begin{equation}
\sum_{S\subset[n]\setminus\{k\}}\hat f(S)\hat f(S\cup\{k\})
=\tup{f,\partial_k(f)}.
\end{equation}
\end{lemma}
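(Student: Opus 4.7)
The plan is to apply Plancherel's identity after reading off the Fourier coefficients of $\partial_k f$ from the expansion already established in the preceding fact. Concretely, the previous fact gives
$$\partial_k f(x) = \sum_{S:\,k\in S} \hat f(S)\,\chi^p_{S\setminus\{k\}}(x).$$
Reindexing by $T = S\setminus\{k\}$, this is precisely the Fourier expansion of $\partial_k f$ in the orthonormal basis $\{\chi^p_T\}_{T\subseteq[n]}$, so we can simply read off
$$\widehat{\partial_k f}(T) = \begin{cases} \hat f(T\cup\{k\}), & k\notin T,\\ 0, & k\in T.\end{cases}$$

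With the Fourier coefficients of $\partial_k f$ in hand, I would then invoke Plancherel's identity:
$$\langle f, \partial_k f\rangle = \sum_{T\subseteq[n]} \hat f(T)\,\widehat{\partial_k f}(T) = \sum_{T\subseteq[n]\setminus\{k\}} \hat f(T)\,\hat f(T\cup\{k\}),$$
where the last equality drops the vanishing terms with $k\in T$. This matches the right-hand side of the claim upon renaming $T$ to $S$.

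There is essentially no obstacle here: the identity is a direct consequence of two facts that have already been proved in the excerpt. The only minor care needed is to verify that the expansion of $\partial_k f$ is indeed the Fourier expansion with respect to the orthonormal basis $\{\chi^p_T\}$ on all of $[n]$ (and not merely on $[n]\setminus\{k\}$), which is immediate since the characters $\chi^p_{S\setminus\{k\}}$ appearing on the right-hand side never involve coordinate $k$. Thus the proof reduces to one line identifying coefficients plus one line of Plancherel.
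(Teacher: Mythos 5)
Your proposal is correct and follows essentially the same route as the paper: both use the Fourier expansion $\partial_k f=\sum_{S\ni k}\hat f(S)\chi^p_{S\setminus\{k\}}$ from the preceding fact together with orthonormality/Plancherel to evaluate $\tup{f,\partial_k f}$. Reading off the coefficients $\widehat{\partial_k f}(T)$ first versus expanding the double sum directly is only a cosmetic difference.
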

\begin{proof}
Since $\partial_k f=\sum_{S\ni k}\hat f(S)\chi_{S\setminus\{k\}}^p$, orthonormality gives
\begin{equation*}
    \begin{split}
\tup{f,\partial_k f}&=\tup{\sum_{T\subset[n]} \hat f(T)\chi_T^p,\sum_{S\ni k}\hat f(S)\chi_{S\setminus\{k\}}^p}\\
&=\sum_{T\subset[n]} \sum_{S\ni k}\hat{f}(T)\hat{f}(S)\tup{\chi_T^p,\chi_{S\setminus\{k\}}^p}=\sum_{S\subset[n]\setminus\{k\}}\hat f(S)\hat f(S\cup\{k\}).        
    \end{split}
\end{equation*}

\end{proof}

\begin{lemma}\label{lem:p-biasedInf2}
Let $f:(\{0,1\}^n,\mu_p^n)\to\{\pm1\}$. Then for every $k\in[n]$,
\[
\tup{f,\partial_k(f)}=2\sqrt{p(1-p)}(2p-1)\cdot{\rm Inf}^{(p)}_k[f].
\]
\end{lemma}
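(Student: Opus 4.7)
The identity to prove expands $\langle f,\partial_k f\rangle$ using the definition $\partial_k f(x)=\sqrt{p(1-p)}\cdot(f_{k\to1}(x)-f_{k\to0}(x))$, so everything reduces to computing the scalar $\mathbb{E}_{x\sim\mu_p^n}[f(x)(f_{k\to1}(x)-f_{k\to0}(x))]$. The plan is to condition on the $k$th coordinate and then exploit the fact that $f$ is $\{\pm1\}$-valued, which makes the cross-terms collapse to an indicator of the event $f_{k\to0}(y)\neq f_{k\to1}(y)$.

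First I would split the expectation according to $x_k\in\{0,1\}$ and write $y=x_{-k}$. In the event $x_k=1$, which carries weight $p$, we have $f(x)=f_{k\to1}(y)$, so the relevant integrand becomes $f_{k\to1}(y)^2-f_{k\to1}(y)f_{k\to0}(y)=1-f_{k\to1}(y)f_{k\to0}(y)$ since $f$ is Boolean. In the event $x_k=0$, which carries weight $1-p$, we have $f(x)=f_{k\to0}(y)$, and the integrand becomes $f_{k\to0}(y)f_{k\to1}(y)-1$. Combining with the coefficients $p$ and $1-p$ gives
\[
\mathbb{E}_{x}[f(x)(f_{k\to1}-f_{k\to0})]=(2p-1)\cdot\mathbb{E}_{y}[1-f_{k\to0}(y)f_{k\to1}(y)].
\]

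Next, since $f_{k\to0}(y), f_{k\to1}(y)\in\{\pm1\}$, the quantity $1-f_{k\to0}(y)f_{k\to1}(y)$ equals $2$ when flipping coordinate $k$ changes the value of $f$ and $0$ otherwise. Hence $\mathbb{E}_y[1-f_{k\to0}f_{k\to1}]=2\mathbb{P}_y[f_{k\to0}(y)\neq f_{k\to1}(y)]=2\,{\rm Inf}^{(p)}_k[f]$, using that the event $\{f(x)\neq f(x\oplus e_k)\}$ depends only on $y$. Multiplying by the prefactor $\sqrt{p(1-p)}$ from the definition of $\partial_k$ yields the claimed identity.

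No serious obstacle is expected — the argument is a direct, bookkeeping-style manipulation, and the main thing to keep straight is the interplay between the $p$ and $1-p$ weights (which is what produces the $(2p-1)$ factor that vanishes in the symmetric case $p=1/2$, consistently with the fact that $\langle f,\partial_k f\rangle=0$ for odd functions on the unbiased cube). One alternative route, which I would mention in passing but not pursue, is to combine Lemma~\ref{lem:Fourierp-biasedinf} with the level-$k$ Fourier mass formula to rewrite $\langle f,\partial_k f\rangle=\sum_{S\not\ni k}\hat f(S)\hat f(S\cup\{k\})$; however the conditioning argument is shorter and more transparent.
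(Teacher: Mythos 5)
Your proposal is correct and follows essentially the same route as the paper: condition on $x_{-k}$, average over $x_k$ to produce the weight $p\alpha+(1-p)\beta$ against $\alpha-\beta$, and use $\alpha^2=\beta^2=1$ to reduce $(2p-1)(1-\alpha\beta)$ to $2(2p-1)\mathbbm{1}_{\alpha\neq\beta}$. The bookkeeping, including the observation that the event $f(x)\neq f(x\oplus e_k)$ depends only on $x_{-k}$, matches the paper's argument exactly.
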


\begin{proof}
Fix $x_{-k}$ and set $\alpha:=f(1,x_{-k})$, $\beta:=f(0,x_{-k})$.
Then
\begin{equation*}
\begin{split}
\tup{f,\partial_k(f)}&=\underset{x_{-k}}{\mathbb{E}}\underset{x_{k}}{\mathbb{E}}\left[f(x_k,x_{-k})\cdot \sqrt{p(1-p)}(\alpha-\beta)\right]\\
&=\sqrt{p(1-p)}\underset{x_{-k}}{\mathbb{E}}\left[(p\alpha+(1-p)\beta)\cdot(\alpha-\beta)\right]=\sqrt{p(1-p)}\cdot(2p-1)\underset{x_{-k}}{\mathbb{E}}[1-\alpha\beta]\\
&=\sqrt{p(1-p)}\cdot(2p-1)\underset{x_{-k}}{\mathbb{E}}[2\mathbbm{1}_{\alpha\neq \beta}]=2\sqrt{p(1-p)}\cdot(2p-1)\underset{x_{-k}}{\mathbb{P}}[\alpha\neq \beta]\\
&=2\sqrt{p(1-p)}\cdot(2p-1)\cdot{\rm Inf}_k^{(p)}[f],
\end{split}
\end{equation*}
where we use $\alpha^2=\beta^2=1$. 
\end{proof}

\subsection{Random restrictions technique}

\noindent One of the most useful tools in analysis of Boolean functions is the notion of restrictions and random restrictions. Let $J\subseteq[n]$ be a subset of coordinates thought of as ``alive'' and coordinates in $J^c=[n]\setminus J$ thought of as ``restricted''. 
\begin{definition}[Restrictions]
Suppose we have a function $f:(\{0,1\}^n,\mu_p^n)\to\mathbb{R}$, a set of coordinates $J\subseteq[n]$ and an assignment to them $z\in\{0,1\}^{J^c}$. The restricted function $f_{J^c\to z}:(\{0,1\}^J,\mu_p^J)\to\mathbb{R}$ is defined by
$$
f_{J^c\to z}(y)=f(x_J=y,x_{J^c}=z)=f(y,z).
$$
\end{definition}
\noindent First, we give a formula for the Fourier coefficients of the restricted function.

\begin{proposition}\label{prop:RFour1}
Let $f:(\{0,1\}^n,\mu_p^n)\to\mathbb{R}, J\subseteq[n], z\in\{0,1\}^{J^c}$ and $T\subseteq J$. Then we have
$$
\hat{f}_{J^c\to z}(T)=\sum\limits_{S\subseteq J^c}\hat{f}(S\cup T)\chi^p_S(z).
$$
\end{proposition}
\begin{proof}
\noindent We write $f$ according to its Fourier transform, decomposing a character into its $J$ and $J^c$ parts
$$
f(x)=\sum\limits_{T\subseteq J,S\subseteq J^c}\hat{f}(S\cup T)\chi^p_{S\cup T}(x)=\sum\limits_{T\subseteq J,S\subseteq J^c}\hat{f}(S\cup T)\chi^p_T(x_J)\chi^p_S(x_{J^c}).
$$
Plugging in the value $y$ to $x_J$ and $z$ to $x_{J^c}$, we get that
$$
f_{J^c\to z}(y)=f(y,z)=\sum\limits_{T\subseteq J}\left(\sum\limits_{S\subseteq J^c}\hat{f}(S\cup T)\chi^p_S(z)\right)\chi^p_T(y)=\sum\limits_{T\subseteq J}\hat{f}_{J^c\to z}(T)\chi^p_T(y).
$$
The fact now follows from the uniqueness of the Fourier decomposition.
\end{proof}

\begin{definition}[Random restrictions]
Given $f:(\{0,1\}^n,\mu_p^n)\to\mathbb{R}$ and $J\subseteq[n]$, a random restriction of $f$ on $J$ is a function $f_{J^c\to z}$ wherein $z\in\{0,1\}^{J^c}$ is sampled from distribution $\mu_p^{J^c}$.
\end{definition}

\begin{proposition}\label{prop:RFour2}
Let $f:(\{0,1\}^n,\mu_p^n)\to\mathbb{R}, J\subseteq[n]$ and $T\subseteq J$. Then we have
$$
\underset{z\sim\mu_p^{J^c}}{\mathbb{E}}\left[\hat{f}_{J^c\to z}(T)\right]=\hat{f}(T).
$$
\end{proposition}
\begin{proof}
By Proposition~\ref{prop:RFour1} and linearity of expectation
$$
\underset{z\sim\mu_p^{J^c}}{\mathbb{E}}\left[\hat{f}_{J^c\to z}(T)\right]=\underset{z\sim\mu_p^{J^c}}{\mathbb{E}}\left[\sum\limits_{S\subseteq J^c}\hat{f}(S\cup T)\chi^p_S(z)\right]=\sum\limits_{S\subseteq J^c}\hat{f}(S\cup T)\underset{z\sim\mu_p^{J^c}}{\mathbb{E}}[\chi^p_S(z)]=\hat{f}(T),
$$
since $\mathbb{E}_z[\chi_S^p(z)]=0$ for $S\neq\emptyset$ and equals $1$ for $S=\emptyset$.
\end{proof}
\subsection{Entropy as a function of the Bhattacharyya parameter}
Define, for $t\in[0,\tfrac12]$,
\begin{equation}\label{eq:Psi-def}
\Psi(t):=h\left(\frac{1+\sqrt{1-4t^2}}{2}\right).
\end{equation}
Note that if $\theta\in[0,1]$ and $t=\sqrt{\theta(1-\theta)}$, then $h(\theta)=\Psi(t)$.
\begin{lemma}[Convexity of $\Psi$]\label{lem:Psi-convex}
$\Psi:[0,\tfrac12]\to[0,\ln 2]$ is increasing and convex.
\end{lemma}
\begin{proof}
See Appendix~\ref{app:convexityof Phi}.
\end{proof}
\begin{lemma}\label{lem:sharp-jensen}
Let $\{(a_i,b_i)\}_{i\in I}$ be finitely many real pairs and set $w_i:=a_i^2+b_i^2$.
If $\sum_{i\in I} w_i=1$, then
\[
\sum_{i\in I} w_ih\left(\frac{a_i^2}{a_i^2+b_i^2}\right)
\ge
\Psi\left(\left|\sum_{i\in I} a_i b_i\right|\right).
\]
\end{lemma}

\begin{proof}
For indices with $w_i>0$, define $\theta_i=\frac{a_i^2}{w_i}$ and $t_i=\sqrt{\theta_i(1-\theta_i)}=\frac{|a_ib_i|}{w_i}\in[0,\tfrac12]$.
Then $h(\theta_i)=\Psi(t_i)$ and
\[
\sum_i w_i h(\theta_i)=\sum_i w_i \Psi(t_i) \ge \Psi\left(\sum_i w_i t_i\right)
=\Psi\left(\sum_i |a_i b_i|\right)
\ge \Psi\left(\left|\sum_i a_i b_i\right|\right),
\]
using Jensen with Lemma~\ref{lem:Psi-convex}, then $\sum_i|a_ib_i|\ge|\sum_i a_ib_i|$ and monotonicity of $\Psi$.
\end{proof}
\begin{lemma}[Proposition~3.3 in~\cite{wyner2003common}]\label{lem:Psi-concave}
Define $\psi(s):=\Psi(\sqrt{s})$ for $s\in[0,\tfrac14]$. Then $\psi$ is concave on $[0,\tfrac14]$ and $\psi(0)=0$.
Consequently, for every $s_0\in(0,\tfrac14]$ and every $s\in[0,s_0]$,
\begin{equation}\label{eq:chord}
\Psi(\sqrt{s})=\psi(s) \ge\frac{\psi(s_0)}{s_0}\cdot s.
\end{equation}
\end{lemma}
\begin{proof}
This is a standard fact; for completeness we provide a proof in Appendix~\ref{app:convexityof Phi}.
\end{proof}

\section{Proofs}\label{sec:proofs}

\subsection{Proof strategy}
We outline the main ideas before the technical lemmas.
\begin{itemize}
    \item[(1)] (\emph{Entropy as a derivative of a restriction moment.}) We encode the Fourier entropy as the $\varepsilon$-derivative at $0$ of a restricted-moment functional
    $M_{J,\varepsilon,p}(f)$, and telescope along a fixed coordinate chain
    $\varnothing=J_0\subset J_1\subset\cdots\subset J_n=[n]$.

    \item[(2)] (\emph{One coordinate reveals a two-point mixture.})
  Adding a new coordinate $k$ pairs restricted coefficients $(a,b)=(\widehat{f_{J_k^c\to z'}}(S),\widehat{f_{J_k^c\to z'}}(S\cup\{k\}))$, and the corresponding coefficient on the smaller alive set becomes $a+\chi_k^p(z_k)\,b$. This yields a two-point functional $\Phi_{p,\varepsilon}(a,b)$ for the one-step increment.

    \item[(3)] (\emph{Main technical step.})
    Differentiating $\Phi_{p,\varepsilon}$ at $\varepsilon=0$ yields a local ``entropy drop'' governed by the binary entropy function:
    \[
    -\partial_\varepsilon \Phi_{p,\varepsilon}(a,b)\big|_{\varepsilon=0}
\ge(a^2+b^2)h\left(\frac{a^2}{a^2+b^2}\right).
    \]
    We then introduce the convex increasing transform
    $\Psi(t):=h\left(\frac{1+\sqrt{1-4t^2}}{2}\right)$, so that $h(\theta)=\Psi(\sqrt{\theta(1-\theta)})$.
    A sharp Jensen step converts the weighted sum over $S$ into a single term:
    \[
    \sum_{S}(a_S^2+b_S^2)h\left(\frac{a_S^2}{a_S^2+b_S^2}\right)
\ge\Psi\left(\left|\sum_{S} a_S b_S\right|\right),
    \]
    which is the key point where we avoid quadratic relaxations and retain the logarithmic structure.

    \item[(4)] (\emph{Identify influences and sum over coordinates.}) We show $\mathbb{E}_{z'}[\sum_S a_S b_S]=\langle f,\partial_k f\rangle$ and $|\langle f,\partial_k f\rangle|=\sqrt{q(1-q)}\,\Inf_k^{(p)}[f]$. 
    Plugging this into the previous step and summing over $k$ yields the main non-linear bound
    \[
    \Ent_p(f) \ge\sum_{k=1}^n \Psi\left(\sqrt{q(1-q)}\cdot\Inf_k^{(p)}[f]\right).
    \]
\end{itemize}

\subsection{Restricted moments and One-step increment as a two-point functional}
For $J\subseteq[n]$ and $\varepsilon\in[0,\frac12)$ define
\[
M_{J,\varepsilon,p}(f):=\sum_{S\subset J}\underset{z\sim\mu_p^{J^c}}{\mathbb{E}}
\left[\left|\widehat{f_{J^c\to z}}(S)\right|^{2(1+\varepsilon)}\right].
\]
Fix a coordinate chain $\varnothing=J_0\subset J_1\subset\cdots\subset J_n=[n]$ with $J_k\setminus J_{k-1}=\{k\}$, and set
\[
\Delta_{k,\varepsilon,p}(f):=M_{J_k,\varepsilon,p}(f)-M_{J_{k-1},\varepsilon,p}(f).
\]
Then
\begin{equation}\label{eq:telescope}
M_{[n],\varepsilon,p}(f)
=
M_{\varnothing,\varepsilon,p}(f)+\sum_{k=1}^n \Delta_{k,\varepsilon,p}(f).
\end{equation}
For Boolean $f$, $M_{\varnothing,\varepsilon,p}(f)=\mathbb{E}[|f|^{2(1+\varepsilon)}]=1$, while
$M_{[n],\varepsilon,p}(f)=\sum_{S\subset[n]}|\hat f(S)|^{2(1+\varepsilon)}$.
Differentiating \eqref{eq:telescope} at $\varepsilon=0$ yields
\begin{equation}\label{eq:Ent-derivative}
\Ent_p(f)
=
-\frac{d}{d\varepsilon}M_{[n],\varepsilon,p}(f)\Big|_{\varepsilon=0}
=
-\sum_{k=1}^n \frac{d}{d\varepsilon}\Delta_{k,\varepsilon,p}(f)\Big|_{\varepsilon=0}.
\end{equation}
Fix $k\in[n]$ and abbreviate $J_1:=J_{k-1}$ and $J_2:=J_k=J_1\cup\{k\}$.
Write $z=(z',z_k)\in\{0,1\}^{J_1^c}$ with $z'\in\{0,1\}^{J_2^c}$ and $z_k\in\{0,1\}$, and let $\alpha:=\sqrt{\frac{1-p}{p}},\beta:=\sqrt{\frac{p}{1-p}}$.
\begin{lemma}\label{lem:pairing}
For every $S\subseteq J_1$ and every $z=(z',z_k)$,
\[
\widehat{f_{J_1^c\to z}}(S)
=
\widehat{f_{J_2^c\to z'}}(S)
+\chi_k^p(z_k)\widehat{f_{J_2^c\to z'}}(S\cup\{k\}).
\]
\end{lemma}

\begin{proof}
Expand $f_{J_2^c\to z'}$ on the alive set $J_2$:
\[
f_{J_2^c\to z'}(y,z_k)
=\sum_{T\subseteq J_2}\widehat{f_{J_2^c\to z'}}(T)\chi_T^p(y,z_k)
=\sum_{S\subseteq J_1}\Big(a_S+\chi_k^p(z_k)b_S\Big)\chi_S^p(y),
\]
where $a_S=\widehat{f_{J_2^c\to z'}}(S)$ and $b_S=\widehat{f_{J_2^c\to z'}}(S\cup\{k\})$.
Identifying the coefficient of $\chi_S^p$ in the restriction $f_{J_1^c\to z}(y)=f_{J_2^c\to z'}(y,z_k)$ gives the claim.
\end{proof}

Define the two-point functional, for $a,b\in\mathbb{R}$,
\begin{equation}\label{eq:Phi-def}
\Phi_{p,\varepsilon}(a,b)
:=
|a|^{2(1+\varepsilon)}+|b|^{2(1+\varepsilon)}
-
p\,|a+\alpha b|^{2(1+\varepsilon)}
-(1-p)\,|a-\beta b|^{2(1+\varepsilon)}.
\end{equation}

\begin{lemma}[Derivative bound via convexity]\label{lem:Phi-derivative}
For all real $a,b$,
\begin{equation}\label{eq:Phi-derivative}
\frac{\partial}{\partial\varepsilon}\Phi_{p,\varepsilon}(a,b)\Big|_{\varepsilon=0}
\le
-(a^2+b^2)\cdot h\left(\frac{a^2}{a^2+b^2}\right),
\end{equation}
interpreting the right-hand side as $0$ when $a=b=0$.
\end{lemma}
\begin{proof}
Let $u:=(a+\alpha b)^2$ and $v:=(a-\beta b)^2$. Differentiating \eqref{eq:Phi-def} at $\varepsilon=0$ gives
\[
\frac{\partial}{\partial\varepsilon}\Phi_{p,\varepsilon}(a,b)\Big|_{\varepsilon=0}
=
a^2\log a^2+b^2\log b^2 -p\,u\log u-(1-p)\,v\log v,
\]
with $0\log0:=0$.
Since $t\mapsto t\log t$ is convex, Jensen yields
\[
p\,u\log u+(1-p)\,v\log v \ge \left(pu+(1-p)v\right)\log\left(pu+(1-p)v\right).
\]
A direct calculation shows $pu+(1-p)v=a^2+b^2$. Substituting back gives \eqref{eq:Phi-derivative}.
\end{proof}

\begin{lemma}[Increment formula]\label{lem:increment}
For the above chain and $k$,
\begin{equation}\label{eq:Delta-Phi}
\Delta_{k,\varepsilon,p}(f)
=
\sum_{S\subset J_1}\mathbb{E}_{z'\sim\mu_p^{J_2^c}}
\left[
\Phi_{p,\varepsilon}\left(
\widehat{f_{J_2^c\to z'}}(S),
\widehat{f_{J_2^c\to z'}}(S\cup\{k\})
\right)
\right].
\end{equation}
\end{lemma}

\begin{proof}
By Lemma~\ref{lem:pairing}, conditioning on $z'$ and averaging over $z_k\sim\mu_p$ yields
\[
\mathbb{E}_{z_k}\left[\left|a_S+\chi_k^p(z_k)b_S\right|^{2(1+\varepsilon)}\right]
=
p|a_S+\alpha b_S|^{2(1+\varepsilon)}+(1-p)|a_S-\beta b_S|^{2(1+\varepsilon)}.
\]
On the other hand, $M_{J_2,\varepsilon,p}$ contains both coefficients indexed by $S$ and by $S\cup\{k\}$.
Subtracting $M_{J_1,\varepsilon,p}$ from $M_{J_2,\varepsilon,p}$ gives \eqref{eq:Delta-Phi}.
\end{proof}
Combining \eqref{eq:Ent-derivative}, \eqref{eq:Delta-Phi}, and Lemma~\ref{lem:Phi-derivative} yields
\begin{equation}\label{eq:Ent-local-entropy}
\Ent_p(f)
\ge
\sum_{k=1}^n
\sum_{S\subset J_{k-1}}
\underset{z'\sim\mu_p^{J_k^c}}{\mathbb{E}}
\left[
(a_S^2+b_S^2)\cdot h\left(\frac{a_S^2}{a_S^2+b_S^2}\right)
\right],
\end{equation}
where for brevity we write $a_S=\widehat{f_{J_k^c\to z'}}(S)$ and $b_S=\widehat{f_{J_k^c\to z'}}(S\cup\{k\})$.
\subsection{Proof of Theorem~\ref{thm:main-1}}
\begin{lemma}[Normalization]\label{lem:normalization}
Fix $k$ and $z'\in\{0,1\}^{J_k^c}$. Then
\[
\sum_{S\subset J_{k-1}}\left(a_S^2+b_S^2\right)=1,
\]
where $a_S=\widehat{f_{J_k^c\to z'}}(S)$ and $b_S=\widehat{f_{J_k^c\to z'}}(S\cup\{k\})$.
\end{lemma}

\begin{proof}
For fixed $(z',z_k)$, the restricted function $f_{J_{k-1}^c\to(z',z_k)}$ is Boolean on $\{0,1\}^{J_{k-1}}$,
hence Parseval gives $\sum_{S\subset J_{k-1}}\widehat{f_{J_{k-1}^c\to(z',z_k)}}(S)^2=1$.
Averaging over $z_k\sim\mu_p$ and using Lemma~\ref{lem:pairing} yields
$\mathbb{E}_{z_k}[\widehat{f_{J_{k-1}^c\to(z',z_k)}}(S)^2]=a_S^2+b_S^2$ for each $S$, and summing over $S$ gives the claim.
\end{proof}
\begin{lemma}\label{lem:identify}
For each fixed $k$,
\[
\underset{z'\sim\mu_p^{J_k^c}}{\mathbb{E}}\left[\sum_{S\subset J_{k-1}} a_S b_S\right]
=
\sum_{R\subset[n]\setminus\{k\}}\hat f(R)\hat f(R\cup\{k\})
=
\langle f,\partial_k f\rangle,
\]
where $a_S=\widehat{f_{J_k^c\to z'}}(S)$ and $b_S=\widehat{f_{J_k^c\to z'}}(S\cup\{k\})$.
\end{lemma}

\begin{proof}
Fix $S\subseteq J_{k-1}$. By the restriction formula (Proposition~\ref{prop:RFour1}),
\[
a_S(z')=\sum_{T\subseteq J_k^c}\hat f(S\cup T)\chi_T^p(z'),
\qquad
b_S(z')=\sum_{T\subseteq J_k^c}\hat f(S\cup T\cup\{k\})\chi_T^p(z').
\]
Taking the inner product in $L^2(\{0,1\}^{J_k^c},\mu_p^{J_k^c})$ and using orthonormality of
$\{\chi_T^p\}_{T\subseteq J_k^c}$ gives
\[
\mathbb{E}_{z'}[a_S(z')b_S(z')]
=\sum_{T\subseteq J_k^c}\hat f(S\cup T)\hat f(S\cup T\cup\{k\}).
\]
Summing over $S\subseteq J_{k-1}$ and re-indexing by $R=S\cup T$ (note that
$[n]\setminus\{k\}=J_{k-1}\sqcup J_k^c$) yields
\[
\mathbb{E}_{z'}\left[\sum_{S\subseteq J_{k-1}} a_S b_S\right]
=
\sum_{R\subseteq[n]\setminus\{k\}}\hat f(R)\hat f(R\cup\{k\}).
\]
The second equality is Lemma~\ref{lem:Fourierp-biasedinf}.
\end{proof}

\begin{proof}[Proof of Theorem~\ref{thm:main-1}]
Start from \eqref{eq:Ent-local-entropy}. Fix $k$ and $z'$, and apply Lemma~\ref{lem:sharp-jensen} to the family
$\{(a_S,b_S)\}_{S\subset J_{k-1}}$, using Lemma~\ref{lem:normalization} to verify $\sum_S(a_S^2+b_S^2)=1$:
\[
\sum_{S\subset J_{k-1}}(a_S^2+b_S^2)\cdot h\left(\frac{a_S^2}{a_S^2+b_S^2}\right)\ge
\Psi\left(\left|\sum_{S\subset J_{k-1}} a_S b_S\right|\right).
\]
Taking expectation over $z'$ and using Jensen for the convex function $\Psi$ gives
\[
\mathbb{E}_{z'}\left[\Psi\!\left(\left|\sum_S a_S b_S\right|\right)\right]\ge
\Psi\left(\mathbb{E}_{z'}\left[\left|\sum_S a_S b_S\right|\right]\right)\ge
\Psi\left(\left|\mathbb{E}_{z'}\left[\sum_S a_S b_S\right]\right|\right).
\]
By Lemma~\ref{lem:identify}, $\mathbb{E}_{z'}[\sum_S a_S b_S]=\tup{f,\partial_k f}$, hence
\begin{equation}\label{eq:Ent-Psi}
\Ent_p(f) \ge \sum_{k=1}^n \Psi\left(|\tup{f,\partial_k f}|\right)=\sum_{k=1}^n \Psi\left(\sqrt{q(1-q)}\cdot\Inf_k^{(p)}[f]\right),
\end{equation}
using Lemma~\ref{lem:p-biasedInf2}, $
|\tup{f,\partial_k f}|=
2\sqrt{p(1-p)}|2p-1|\Inf_k^{(p)}[f]
=\sqrt{q(1-q)}\Inf_k^{(p)}[f]$. 
\end{proof}

\begin{proof}[Proof of~\eqref{eq:main-linear-intro}]
Apply Lemma~\ref{lem:Psi-concave} with $s=q(1-q)\Inf_k^{(p)}[f]^2\le q(1-q)$ and $s_0=q(1-q)$.
Since $\psi(s_0)=\Psi(\sqrt{q(1-q)})=h(q)$ (using $h(q)=h(1-q)$), \eqref{eq:chord} yields
\[
\Psi\left(\sqrt{q(1-q)}\cdot\Inf_k^{(p)}[f]\right) \ge h(q)\cdot\Inf_k^{(p)}[f]^2.
\]
Plugging this into \eqref{eq:Ent-Psi} and summing over $k$ completes the proof.
\end{proof}

\subsection{Characterization of Equality Cases}  
\begin{proposition}[Extremizers for the non-linear bound]\label{prop:extremizers-nonlinear}
Assume $p\neq\tfrac12$ and write $q:=4p(1-p)\in(0,1)$.
Equality holds in
\[
\Ent_p(f)\ge \sum_{k=1}^n\Psi\left(\sqrt{q(1-q)}\cdot\Inf_k^{(p)}[f]\right)
\]
if and only if $f(x)=\pm\prod_{i\in T}(2x_i-1)$ for some $T\subseteq[n]$ (including $T=\varnothing$).
\end{proposition}
\begin{proof}
Assume first that equality holds in the stated inequality. In the proof of the bound one writes
$\Ent_p(f)=\sum_{k=1}^n \Delta_{k,\varepsilon,p}(f)$ (and then lets $\varepsilon\downarrow 0$),
and for each $k$ one proves the one-coordinate lower bound
\[
\Delta_{k,\varepsilon,p}(f) \ge \Psi\left(\sqrt{q(1-q)}\cdot\Inf_k^{(p)}[f]\right).
\]
Hence equality in the sum forces equality for each $k$ separately. Fix such a $k$.
\begin{itemize}
    \item[(1)] \emph{Equality in the two-point convexity step (Lemma~\ref{lem:Phi-derivative}).} For $z'\in\{0,1\}^{J_k^c}$ write $a_S(z'):=\widehat{f_{J_k^c\to z'}}(S)$ and $b_S(z'):=\widehat{f_{J_k^c\to z'}}(S\cup\{k\})$ for any $S\subseteq J_{k-1}$. In Lemma~\ref{lem:Phi-derivative} one applies Jensen to the convex function $t\mapsto t\log t$
to two numbers of the form $(a_S+\alpha b_S)^2$ and $(a_S-\beta b_S)^2$ (with $\alpha,\beta>0$ determined by $p$).
Equality in Jensen forces these two inputs to coincide:
\[
(a_S+\alpha b_S)^2=(a_S-\beta b_S)^2 \qquad\text{for every }S \text{ and a.e.\ } z'.
\]
Thus for each $(S,z')$ either $b_S(z')=0$ or else
\[
a_S(z')=\lambda\,b_S(z')\quad\text{where}\quad
\lambda:=\frac{\beta-\alpha}{2}=\frac{2p-1}{2\sqrt{p(1-p)}}.
\]
In particular, whenever $(a_S(z'),b_S(z'))\neq(0,0)$ we have
\[
\frac{|b_S(z')|}{|a_S(z')|}=\frac{1}{|\lambda|}
=\sqrt{\frac{q}{1-q}},\qquad
\frac{|a_S(z')b_S(z')|}{a_S(z')^2+b_S(z')^2}=\sqrt{q(1-q)}.
\]
\item[(2)] \emph{Equality in the ``sharp Jensen'' step over $S$ (Lemma~\ref{lem:sharp-jensen}).} The sharp Jensen step combines (i) the fact that all nonzero pairs $(a_S(z'),b_S(z'))$ share the same ratio
$|b_S|/|a_S|=\sqrt{q/(1-q)}$ with (ii) the inequality $\sum_S |a_S b_S|\ge \big|\sum_S a_S b_S\big|$.
Equality forces the summands $a_S(z')b_S(z')$ to have a common sign for all $S$ with $(a_S,b_S)\neq(0,0)$.
Consequently, for a.e.\ $z'$ either $b_S(z')\equiv 0$ for all $S$, or else there exists $\sigma_k(z')\in\{\pm1\}$ such that
\begin{equation}\label{eq:pointwise-ratio}
b_S(z')=\sigma_k(z')\sqrt{\frac{q}{1-q}}\; a_S(z')
\qquad\text{for all }S\subseteq J_{k-1}.
\end{equation}
Note that we use the inequality $\E_{z'}|X(z')|\ge |\E_{z'}X(z')|$ for $
X(z'):=\sum_{S\subseteq J_{k-1}} a_S(z')\,b_S(z')$. Equality in $\E|X|\ge|\E X|$ implies that $X(z')$ has an a.s. constant sign (or is a.s. zero).
Under \eqref{eq:pointwise-ratio} we have
\[
X(z')=\sigma_k(z')\sqrt{\frac{q}{1-q}}\sum_{S\subseteq J_{k-1}} a_S(z')^2,
\]
so on the event $\sum_S a_S(z')^2>0$ the sign of $X(z')$ is exactly $\sigma_k(z')$.
Therefore $\sigma_k(z')$ must be a.s. constant on $\{\sum_S a_S(z')^2>0\}$.
Denote this constant by $\sigma_k\in\{\pm1\}$.
(If instead $\sum_S a_S(z')^2=0$ a.s., then $a_S\equiv 0$ and hence also $b_S\equiv 0$ a.s.; this is the trivial case.)

Thus, in the nontrivial case we have for every $S$ the identity of functions of $z'$
\begin{equation}\label{eq:ratio-functions}
b_S(z')=\sigma_k\sqrt{\frac{q}{1-q}}a_S(z')\qquad\text{for a.e.\ }z'.
\end{equation}
\item[(3)] \emph{The global Fourier ratio coefficientwise implies $\Inf_k^{(p)}[f]\in\{0,1\}$ for every $k$.} By Lemma~\ref{lem:identify},
for each fixed $S\subseteq J_{k-1}$ we can write $a_S(z')=\sum_{T\subseteq J_k^c}\hat f(S\cup T)\,\chi_T^p(z')$ and $b_S(z')=\sum_{T\subseteq J_k^c}\hat f(S\cup T\cup\{k\})\,\chi_T^p(z')$, where $\{\chi_T^p\}_{T\subseteq J_k^c}$ is the $p$-biased orthonormal Fourier basis in the $z'$-variables.
Since \eqref{eq:ratio-functions} is an equality of functions of $z'$, uniqueness of the Fourier expansion implies
\[
\hat f(S\cup T\cup\{k\})=\sigma_k\sqrt{\frac{q}{1-q}}\;\hat f(S\cup T)
\qquad\forall\,T\subseteq J_k^c.
\]
Renaming $R:=S\cup T$, we obtain the \emph{global} coefficientwise ratio
\begin{equation}\label{eq:global-ratio}
\hat f(R\cup\{k\})=\sigma_k\sqrt{\frac{q}{1-q}}\;\hat f(R)
\qquad\forall\,R\subseteq[n]\setminus\{k\}.
\end{equation}
In the trivial case $b_S\equiv 0$ we similarly get $\hat f(R\cup\{k\})=0$ for all $R$.

If $\hat f(R\cup\{k\})\equiv 0$, then $\sum_{S\ni k}\hat f(S)^2=0$ and hence $\Inf_k^{(p)}[f]=0$.

Otherwise, by \eqref{eq:global-ratio} and Parseval,
\[
\sum_{S\ni k}\hat f(S)^2=\sum_{R\subseteq[n]\setminus\{k\}}\hat f(R\cup\{k\})^2
=\frac{q}{1-q}\sum_{R\subseteq[n]\setminus\{k\}}\hat f(R)^2.
\]
Let $A:=\sum_{R\not\ni k}\hat f(R)^2$ and $B:=\sum_{S\ni k}\hat f(S)^2$.
Then $B=\frac{q}{1-q}A$ and $A+B=1$, so $A=1-q$ and $B=q$.
Using the standard identity $\sum_{S\ni k}\hat f(S)^2=q\,\Inf_k^{(p)}[f]$, we get $\Inf_k^{(p)}[f]=1$.
Hence for every $k$ we have $\Inf_k^{(p)}[f]\in\{0,1\}$.
\end{itemize}

Let $T:=\{k\in[n]:\Inf_k^{(p)}[f]=1\}$. Since $\mu_p^n$ has full support and $f\in\{\pm1\}$, the random variable $\frac{(f(x)-f(x\oplus e_k))^2}{4}\in\{0,1\}$ has expectation $\Inf_k^{(p)}[f]$. Thus:
\[
k\notin T \implies f(x)=f(x\oplus e_k)\ \forall x,\qquad
k\in T \implies f(x)=-f(x\oplus e_k)\ \forall x.
\]
Define
\[
g(x):=f(x)\prod_{i\in T}(2x_i-1).
\]
For $k\notin T$, both factors are invariant under $x\mapsto x\oplus e_k$, so $g(x)=g(x\oplus e_k)$.
For $k\in T$, both factors flip sign under $x\mapsto x\oplus e_k$, so again $g(x)=g(x\oplus e_k)$.
Hence $g$ is invariant under flipping any coordinate, so $g$ is constant on $\{0,1\}^n$.
Therefore $f(x)=\pm\prod_{i\in T}(2x_i-1)$, proving the ``only if'' direction.

\medskip
Conversely, let $f(x)=\pm\prod_{i\in T}(2x_i-1)$.
Then flipping coordinate $i$ changes the sign of $f$ iff $i\in T$, hence
$\Inf_i^{(p)}[f]=\mathbbm 1_{\{i\in T\}}$, and so
\[
\sum_{k=1}^n \Psi\left(\sqrt{q(1-q)}\Inf_k^{(p)}[f]\right)
=|T|\Psi\left(\sqrt{q(1-q)}\right).
\]
Moreover, in the $p$-biased Fourier basis one has
\[
2x_i-1=(2p-1)\chi_\emptyset^p(x)+2\sqrt{p(1-p)}\,\chi_{\{i\}}^p(x)
=(2p-1)\chi_\emptyset^p(x)+\sqrt q\,\chi_{\{i\}}^p(x),
\]
so expanding the product gives
\[
f(x)=\pm\prod_{i\in T}\Big((2p-1)\chi_\emptyset^p(x)+\sqrt q\,\chi_{\{i\}}^p(x)\Big)
=\pm\sum_{S\subseteq T}(2p-1)^{|T|-|S|}q^{|S|/2}\,\chi_S^p(x),
\]
and therefore $\hat f(S)^2=(2p-1)^{2(|T|-|S|)}q^{|S|}
=(1-q)^{|T|-|S|}q^{|S|}$ for $S\subseteq T$, and $\hat f(S)=0$ otherwise. Hence
\begin{align*}
\Ent_p(f)
&=\sum_{S\subseteq T}\hat f(S)^2\log\frac1{\hat f(S)^2}=\sum_{r=0}^{|T|}\binom{|T|}{r}(1-q)^{|T|-r}q^r
\log\frac{1}{(1-q)^{|T|-r}q^r}\\
&=|T|\cdot h(q)=\sum_{k=1}^n \Psi\left(\sqrt{q(1-q)}\Inf_k^{(p)}[f]\right),
\end{align*}
so equality holds. This completes the proof.
\end{proof}

\appendix

\section{Proof of Lemma~\ref{lem:Psi-convex}}\label{app:convexityof Phi}
Fix $t\in(0,\tfrac12)$ and set
\[
s=s(t):=\sqrt{1-4t^2}\in(0,1),\qquad \theta=\theta(t):=\frac{1+s}{2}\in\left(\frac12,1\right).
\]
Then $\Psi(t)=h(\theta(t))$.

\medskip
\noindent\textbf{Monotonicity.}
We have $h'(u)=\ln\frac{1-u}{u}$ for $u\in(0,1)$, and
\[
s'(t)=\frac12(1-4t^2)^{-\frac{1}{2}}\cdot(-8t)=-\frac{4t}{s},\qquad 
\theta'(t)=\frac{s'(t)}{2}=-\frac{2t}{s}.
\]
Hence by the chain rule,
\[
\Psi'(t)=h'(\theta)\cdot\theta'
=\ln\left(\frac{1-\theta}{\theta}\right)\cdot\left(-\frac{2t}{s}\right).
\]
Since $\theta\in(\tfrac12,1)$ we have $\ln(\frac{1-\theta}{\theta})<0$, while $-\frac{2t}{s}<0$, so $\Psi'(t)>0$.
Thus $\Psi$ is strictly increasing on $(0,\tfrac12)$ and hence increasing on $[0,\tfrac12]$ by continuity.

\medskip
\noindent\textbf{Convexity.}
First rewrite $
\frac{1-\theta}{\theta}=\frac{1-\frac{1+s}{2}}{\frac{1+s}{2}}=\frac{1-s}{1+s}$. Therefore the above formula for $\Psi'(t)$ becomes
\begin{equation}\label{eq:Psi-prime-short}
\Psi'(t)=\frac{2t}{s}\,L(s),
\qquad
L(s):=\ln\left(\frac{1+s}{1-s}\right).
\end{equation}
Differentiate \eqref{eq:Psi-prime-short}. Using $s'(t)=-\frac{4t}{s}$ and $L'(s)=\frac{2}{1-s^2}$, we get
\begin{equation}\label{eq:Psi-second-short}
\Psi''(t)=\left(\frac{2t}{s}\right)'L(s)+\frac{2t}{s}\,L'(s)\,s'(t)
=\frac{2}{s^3}\left(\ln\frac{1+s}{1-s}-2s\right).
\end{equation}
Now define $g(s):=\ln(\frac{1+s}{1-s})-2s$ on $(0,1)$. Then $g(0)=0$ (by continuity) and
\[
g'(s)=\left(\frac{1}{1+s}+\frac{1}{1-s}\right)-2
=\frac{2}{1-s^2}-2=\frac{2s^2}{1-s^2}\ge 0.
\]
Thus $g(s)\ge 0$ for all $s\in(0,1)$, and \eqref{eq:Psi-second-short} implies $\Psi''(t)\ge 0$ for all $t\in(0,\tfrac12)$.
Therefore $\Psi$ is convex on $(0,\tfrac12)$, hence convex on $[0,\tfrac12]$ by continuity.
\section{Proof of Lemma~\ref{lem:Psi-concave}}\label{app:convexityof phi}
Let $s\in(0,\tfrac14)$ and set $t:=\sqrt{1-4s}\in(0,1)$, so that $\psi(s)=h(\frac{1+t}{2})$.
A direct computation yields
\[
\psi''(s)
=
-\frac{2}{t^3}\left(\frac{2t}{1-t^2}-\ln\frac{1+t}{1-t}\right) \le 0,
\]
since $\mathrm{artanh}(t)=\int_0^t \frac{du}{1-u^2}\le \frac{t}{1-t^2}$ implies
$\ln\frac{1+t}{1-t}=2\,\mathrm{artanh}(t)\le \frac{2t}{1-t^2}$.

Note that $\psi(0)=\Psi(0)=h(1)=0$. Fix $s_0\in(0,\tfrac14]$ and consider the secant line between $(0,\psi(0))$ and $(s_0,\psi(s_0))$:
\[
\ell(s):=\psi(0)+\frac{\psi(s_0)-\psi(0)}{s_0}s=\frac{\psi(s_0)}{s_0}s.
\]
Since $\psi$ is concave on $[0,s_0]$, it lies above every secant line, hence $\psi(s)\ge \ell(s)$ for all $s\in[0,s_0]$.
This is exactly \eqref{eq:chord}.

\medskip\noindent\textbf{Acknowledgements.}
The author thanks his advisor Lei Yu for valuable guidance and for pointing out the reference~\cite{wyner2003common}.

\bibliographystyle{abbrv}
\bibliography{reference}

@article {Han2025,
    AUTHOR = {Han, Xiao},
     TITLE = {A new bound for the {F}ourier-entropy-influence conjecture},
   JOURNAL = {Combinatorica},
  FJOURNAL = {Combinatorica. An International Journal on Combinatorics and
              the Theory of Computing},
    VOLUME = {45},
      YEAR = {2025},
    NUMBER = {1},
     PAGES = {Paper No. 4, 12},
      ISSN = {0209-9683},
   MRCLASS = {94A17 (94D10)},
  MRNUMBER = {4848744},
       DOI = {10.1007/s00493-024-00133-z},
       URL = {https://doi.org/10.1007/s00493-024-00133-z},
}

@article {KMS2012,
    AUTHOR = {Keller, Nathan and Mossel, Elchanan and Schlank, Tomer},
     TITLE = {A note on the entropy/influence conjecture},
   JOURNAL = {Discrete Math.},
  FJOURNAL = {Discrete Mathematics},
    VOLUME = {312},
      YEAR = {2012},
    NUMBER = {22},
     PAGES = {3364--3372},
      ISSN = {0012-365X},
   MRCLASS = {05C80},
  MRNUMBER = {2961226},
       DOI = {10.1016/j.disc.2012.07.031},
       URL = {https://doi.org/10.1016/j.disc.2012.07.031},
}

@article {BKS1999,
    AUTHOR = {Benjamini, Itai and Kalai, Gil and Schramm, Oded},
     TITLE = {Noise sensitivity of {B}oolean functions and applications to percolation},
   JOURNAL = {Inst. Hautes \'{E}tudes Sci. Publ. Math.},
  FJOURNAL = {Institut des Hautes \'{E}tudes Scientifiques. Publications
              Math\'{e}matiques},
    NUMBER = {90},
      YEAR = {1999},
     PAGES = {5--43 (2001)},
      ISSN = {0073-8301},
   MRCLASS = {60B15 (60K35 68Q15 82B43 94C10)},
  MRNUMBER = {1813223},
MRREVIEWER = {H. Kesten},
       URL = {http://www.numdam.org/item?id=PMIHES_1999__90__5_0},
}

@article {DS2005Annal,
    AUTHOR = {Dinur, Irit and Safra, Samuel},
     TITLE = {On the hardness of approximating minimum vertex cover},
   JOURNAL = {Ann. of Math. (2)},
  FJOURNAL = {Annals of Mathematics. Second Series},
    VOLUME = {162},
      YEAR = {2005},
    NUMBER = {1},
     PAGES = {439--485},
      ISSN = {0003-486X},
   MRCLASS = {68Q17 (68Q15)},
  MRNUMBER = {2178966},
MRREVIEWER = {Johan H\aa stad},
       DOI = {10.4007/annals.2005.162.439},
       URL = {https://doi.org/10.4007/annals.2005.162.439},
}

@article {KLMM2024globalhyper2024,
    AUTHOR = {Keevash, Peter and Lifshitz, Noam and Long, Eoin and Minzer,
              Dor},
     TITLE = {Hypercontractivity for global functions and sharp thresholds},
   JOURNAL = {J. Amer. Math. Soc.},
  FJOURNAL = {Journal of the American Mathematical Society},
    VOLUME = {37},
      YEAR = {2024},
    NUMBER = {1},
     PAGES = {245--279},
      ISSN = {0894-0347},
   MRCLASS = {06E30 (94D10)},
  MRNUMBER = {4654613},
MRREVIEWER = {I. Villa},
       DOI = {10.1090/jams/1027},
       URL = {https://doi.org/10.1090/jams/1027},
}

@article {Talagrand1994randomgraph,
    AUTHOR = {Talagrand, Michel},
     TITLE = {On {R}usso's approximate zero-one law},
   JOURNAL = {Ann. Probab.},
  FJOURNAL = {The Annals of Probability},
    VOLUME = {22},
      YEAR = {1994},
    NUMBER = {3},
     PAGES = {1576--1587},
      ISSN = {0091-1798},
   MRCLASS = {28A35 (60K35)},
  MRNUMBER = {1303654},
       URL =
              {http://links.jstor.org/sici?sici=0091-1798(199407)22:3<1576:ORAZL>2.0.CO;2-S&origin=MSN},
}

@article {FKthreshold1996,
    AUTHOR = {Friedgut, Ehud and Kalai, Gil},
     TITLE = {Every monotone graph property has a sharp threshold},
   JOURNAL = {Proc. Amer. Math. Soc.},
  FJOURNAL = {Proceedings of the American Mathematical Society},
    VOLUME = {124},
      YEAR = {1996},
    NUMBER = {10},
     PAGES = {2993--3002},
      ISSN = {0002-9939},
   MRCLASS = {05C80},
  MRNUMBER = {1371123},
MRREVIEWER = {Andrzej Ruci\'{n}ski},
       DOI = {10.1090/S0002-9939-96-03732-X},
       URL = {https://doi.org/10.1090/S0002-9939-96-03732-X},
}

@article {CKLS2016,
    AUTHOR = {Chakraborty, Sourav and Kulkarni, Raghav and Lokam,
              Satyanarayana V. and Saurabh, Nitin},
     TITLE = {Upper bounds on {F}ourier entropy},
   JOURNAL = {Theoret. Comput. Sci.},
  FJOURNAL = {Theoretical Computer Science},
    VOLUME = {654},
      YEAR = {2016},
     PAGES = {92--112},
      ISSN = {0304-3975},
   MRCLASS = {94A17},
  MRNUMBER = {3574487},
       DOI = {10.1016/j.tcs.2016.05.006},
       URL = {https://doi.org/10.1016/j.tcs.2016.05.006},
}

@incollection {OWZ2011,
    AUTHOR = {O'Donnell, Ryan and Wright, John and Zhou, Yuan},
     TITLE = {The {F}ourier entropy-influence conjecture for certain classes
              of {B}oolean functions},
 BOOKTITLE = {Automata, languages and programming. {P}art {I}},
    SERIES = {Lecture Notes in Comput. Sci.},
    VOLUME = {6755},
     PAGES = {330--341},
 PUBLISHER = {Springer, Heidelberg},
      YEAR = {2011},
   MRCLASS = {94C10 (94A17)},
  MRNUMBER = {2874117},
MRREVIEWER = {C. Moraga},
       DOI = {10.1007/978-3-642-22006-7\_28},
       URL = {https://doi.org/10.1007/978-3-642-22006-7_28},
}

@inproceedings{KLW2010,
  title={Mansour's Conjecture is True for Random DNF Formulas.},
  author={Klivans, Adam R and Lee, Homin K and Wan, Andrew},
  booktitle={COLT},
  pages={368--380},
  year={2010},
  organization={Citeseer}
}

@incollection {OT2013,
    AUTHOR = {O'Donnell, Ryan and Tan, Li-Yang},
     TITLE = {A composition theorem for the {F}ourier entropy-influence
              conjecture},
 BOOKTITLE = {Automata, languages, and programming. {P}art {I}},
    SERIES = {Lecture Notes in Comput. Sci.},
    VOLUME = {7965},
     PAGES = {780--791},
 PUBLISHER = {Springer, Heidelberg},
      YEAR = {2013},
   MRCLASS = {68Q99 (06E30 94A17)},
  MRNUMBER = {3109120},
       DOI = {10.1007/978-3-642-39206-1\_66},
       URL = {https://doi.org/10.1007/978-3-642-39206-1_66},
}

@inproceedings {WWW2014,
    AUTHOR = {Wan, Andrew and Wright, John and Wu, Chenggang},
     TITLE = {Decision trees, protocols, and the {F}ourier entropy-influence
              conjecture},
 BOOKTITLE = {I{TCS}'14---{P}roceedings of the 2014 {C}onference on
              {I}nnovations in {T}heoretical {C}omputer {S}cience},
     PAGES = {67--79},
 PUBLISHER = {ACM, New York},
      YEAR = {2014},
   MRCLASS = {94A17 (68Q99 94C15)},
  MRNUMBER = {3359465},
MRREVIEWER = {Ulrich Tamm},
}

@article{das2011entropy,
  title={The entropy influence conjecture revisited},
  author={Das, Bireswar and Pal, Manjish and Visavaliya, Vijay},
  journal={arXiv preprint arXiv:1110.4301},
  year={2011}
}

@article{shalev2018fourier,
  title={On the Fourier Entropy Influence conjecture for extremal classes},
  author={Shalev, Guy},
  journal={arXiv preprint arXiv:1806.03646},
  year={2018}
}

@article {ACKSW21fourierentropy,
    AUTHOR = {Arunachalam, Srinivasan and Chakraborty, Sourav and Kouck\'{y}, Michal and Saurabh, Nitin and de Wolf, Ronald},
     TITLE = {Improved bounds on {F}ourier entropy and min-entropy},
   JOURNAL = {ACM Trans. Comput. Theory},
  FJOURNAL = {ACM Transactions on Computation Theory},
    VOLUME = {13},
      YEAR = {2021},
    NUMBER = {4},
     PAGES = {Art. 22, 40},
      ISSN = {1942-3454},
   MRCLASS = {94D10 (42A61 68Q11)},
  MRNUMBER = {4313286},
       DOI = {10.1145/3470860},
       URL = {https://doi.org/10.1145/3470860},
}

@article {KKLMS2020,
    AUTHOR = {Kelman, Esty and Kindler, Guy and Lifshitz, Noam and Minzer,
              Dor and Safra, Muli},
     TITLE = {Towards a proof of the {F}ourier-entropy conjecture?},
   JOURNAL = {Geom. Funct. Anal.},
  FJOURNAL = {Geometric and Functional Analysis},
    VOLUME = {30},
      YEAR = {2020},
    NUMBER = {4},
     PAGES = {1097--1138},
      ISSN = {1016-443X},
   MRCLASS = {68Q87 (06E30 94A17)},
  MRNUMBER = {4153910},
MRREVIEWER = {Wolfgang Lusky},
       DOI = {10.1007/s00039-020-00544-2},
       URL = {https://doi.org/10.1007/s00039-020-00544-2},
}

@article{kesal2017generalized,
  title={{Generalized Bounds on the Capacity of the Binary-Input Channels}},
  author={Kesal, Mustafa},
  journal={arXiv preprint arXiv:1710.06908},
  year={2017}
}

@article{wyner2003common,
  title={The common information of two dependent random variables},
  author={Wyner, Aaron},
  journal={IEEE Transactions on Information Theory},
  volume={21},
  number={2},
  pages={163--179},
  year={2003},
  publisher={IEEE}
}
\end{document}